\newcommand{\Q}{\mathbb{Q}}
\newcommand{\Z}{\mathbb{Z}}
\newcommand{\cP}{\mathscr{P}}
\DeclareMathOperator{\lcm}{lcm}
\DeclareMathOperator{\ord}{ord}
\newtheorem{thm}{Theorem}
\newtheorem{lem}[thm]{Lemma}
\newtheorem{cor}[thm]{Corollary}
\newtheorem{question}[thm]{Question}
\theoremstyle{definition}
\theoremstyle{remark}
\definecolor{darkgreen}{rgb}{0,0.5,0}
\DeclareRobustCommand{\SkipTocEntry}[5]{}
\begin{document}

\title[]{
	Reverse engineered Diophantine equations over $\mathbb{Q}$
}

\begin{abstract}
	Let $\cP_\Q=\{ \alpha^n \; : \; \alpha \in \Q, \; n \ge 2\}$ be
	the set of rational perfect powers, and let $S \subseteq \cP_\Q$
	be a finite subset. We prove the existence of a polynomial
	$f_S \in \Z[X]$ such that $f(\Q) \cap \cP_\Q=S$.
	This generalizes a recent theorem of Gajovi\'{c} who
	recently proved a similar theorem for 
	finite subsets of integer perfect powers.
	Our approach makes use of the
	resolution of the generalized Fermat
	equation of signature $(2,4,n)$ due
	to Ellenberg and others, as well as the
	finiteness of 
	perfect powers in non-degenerate binary recurrence sequences,
	proved by Peth\H{o} and by Shorey and Stewart.
\end{abstract}

\author{Katerina Santicola}

\address{Mathematics Institute\\
    University of Warwick\\
    CV4 7AL \\
    United Kingdom}

\email{Katerina.Santicola@warwick.ac.uk}

\date{\today}
\thanks{The author's research is funded by
a doctoral studentship from the Heilbronn Institute
for Mathematical Research}
\keywords{Diophantine equations, rational points}
\subjclass[2010]{Primary 11D41, Secondary 11D61}

\maketitle

\section{Introduction}
By an \emph{exponential Diophantine equation} 
\cite{STbook} we often mean
an equation of the form
\begin{equation}\label{eqn:exp}
	f(X)=Y^n
\end{equation}
where $f$ is a polynomial with integer or rational coefficients,
and the unknowns $X$, $Y$ are taken to be either integral or rational.
The exponent $n$ is also an unknown, usually taken to run through 
the integers $n \ge 2$. The epithet \lq\lq exponential\rq\rq\ simply
refers to the fact that the exponent is a variable (if the exponent
is fixed the equation is called superelliptic).
A famous theorem of Schinzel and Tijdeman \cite{SchinzelTijdeman}
concerning such equations asserts that if $f \in \Z[X]$
as at least two distinct roots, then there are finitely
many solutions $(X,Y,n)$ to \eqref{eqn:exp} with $X$, $Y$, $n \in \Z$, $n \ge 3$
and $\lvert Y \rvert \ne 1$.
One of the earliest examples of an exponential Diophantine equation
is
\begin{equation}\label{eqn:Lebesgue}
	X^2+1=Y^n, \qquad X,~Y,~n \in \Z, \quad n \ge 2;
\end{equation}
In 1850 Victor Lebesgue \cite{Lebesgue} proved
that the only solutions are $(X,Y)=(0,1)$ if $n$ is odd,
and $(X,Y)=(0,\pm 1)$ if $n$ is even.
Lebesgue's argument is elementary, and some version
of this argument, with the help of the Primitive Divisor
Theorem of Bilu, Hanrot and Voutier \cite{BHV} is capable
of resolving many equations of the form
$X^2+C=Y^n$ (e.g. \cite{Cohn}). In contrast, the equation $X^2+7=Y^n$ 
(which generalizes the famous Ramanujan--Nagell equation) 
apparently cannot be tackled by elementary arguments,
and was solved by
Bugeaud, Mignotte and Siksek \cite{BMS} using a combination
of tools from Diophantine approximation and Galois 
representations of elliptic curves.
We can think of the Catalan equation
\[
	X^m-Y^n=1, \qquad X,~Y,~m,~n \in \Z, \quad m,~n \ge 2
\]
as an infinite family of exponential Diophantine 
equations of the form \eqref{eqn:exp}
by rewriting it as $X^m+1=Y^n$,
giving an exponential Diophantine equation
for each value of $m$.
The famous Catalan conjecture, proved
by Mih\u{a}ilescu \cite{Mihailescu} in 2004,
states the only solution with $XY \ne 0$ is $3^2-2^3=1$.

We can view equation \eqref{eqn:exp} as the question
of which perfect powers belong to $f(\Z)$
(if the unknowns $X$, $Y$ belong to $\Z$)
or to $f(\Q)$ 
(if the unknowns $X$, $Y$ belong to $\Q$).
It is therefore natural to ask which
whether there is a restriction
on the set of perfect powers
that can belong to $f(\Z)$
or $f(\Q)$.
Indeed, at the recent 
\lq\lq Rational Points\rq\rq\ conference (Schney, April 2022),
Siksek posed the following two questions.
\begin{question}\label{question:1}
Let 
	\[
		\cP_\Z \; = \; \{ a^n \; : \; a \in \Z, \quad n \ge 2\}
	\]
be the set of perfect powers in $\Z$. Let $S$ be a finite
	subset of $\cP_\Z$. Is there a polynomial $f_S \in \Z[X]$
	such that $f_S(\Z) \cap \cP_\Z=S$?
\end{question}
\begin{question}\label{question:2}
Let 
	\begin{equation}\label{eqn:cQ}
		\cP_\Q \; = \; \{ \alpha^n \; : \; \alpha \in \Q, \quad n \ge 2\}
	\end{equation}
be the set of perfect powers in $\Q$. Let $S$ be a finite
	subset of $\cP_\Q$. Is there a polynomial $f_S \in \Q[X]$
	such that $f_S(\Q) \cap \cP_\Q=S$?
\end{question}

Question~\ref{question:1} was answered
affirmatively by  Gajovi\'{c} \cite[Theorem 3.1]{Gajovic},
and we briefly recall (and slightly simplify) his elegant argument which 
  yields an
explicit polynomial $f_S$ in terms of $S$.
Let $S=\{b_1,b_2,\dotsc,b_r\} \subseteq \cP_\Z$.
Let
\[
	g(X)\; =\;  \prod_{i=1}^r (X-b_i)^2+1,
	\qquad
	h(X) \; = \; (X-1) \cdot g(X) +1, 
	\qquad
	f_S(X) \; =\; g(X) \cdot h(X).
\]
It is clear that $f_S(b_i)=b_i$, so $S \subseteq f_S(\Z) \cap \cP_\Z$.
To prove the reverse inclusion let $x \in \Z$
such that $f_S(x) =y^n$ for some $y \in \Z$ and $n \ge 2$;
it is enough to prove that $x=b_i$ for some $i$.
Write $c=\prod_{i=1}^r(x-b_i)$.
We consider two cases:
\begin{itemize}
	\item $y=0$. Then $h(x)=0$, so $x-1=-1/(c^2+1)$.
	Since $x \in \Z$, we have $c=0$, and so $x=b_i$ for some $i$.
\item $y \ne 0$. Note that $g(x)>0$ and $h(x)$
		are coprime integers whose product
		is $y^n$. Thus
 $g(x)=z^n$ for some integer $z$,
and so $c^2+1=z^n$.
Whence $c=0$, by the aforementioned theorem of Lebesgue concerning
\eqref{eqn:Lebesgue}. Hence $x=b_i$ for some $i$,
completing the proof.
\end{itemize}
We point out that for the last step Gajovi\'{c} invokes
Mih\u{a}ilescu's theorem \cite{Mihailescu}
(the Catalan conjecture), although it is enough
to invoke the special (and elementary) case due to
Lebesgue.

\bigskip

The purpose this paper is to
give an affirmative answer to Question~\ref{question:2}.
Concisely, our theorem can be stated as follows.
\begin{thm}\label{thm:main}
Let $S$ be a finite subset of $\cP_\Q$. 
	Then there is a polynomial $f_S \in \Z[X]$
	such that $f_S(\Q) \cap \cP_\Q=S$.
\end{thm}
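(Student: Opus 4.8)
The plan is to keep the shape of Gajovi\'{c}'s product construction $f_S=g\cdot h$, but to replace the elementary input ``$c^{2}+1$ is not a perfect power unless $c=0$'' (which fails over $\Q$, e.g.\ $(3/4)^{2}+1=(5/4)^{2}$) by the resolution of the generalized Fermat equation of signature $(2,4,n)$, and to control denominators by taking $f_S$ monic. Since every element of $\cP_\Q$ is a $p$-th power for some prime $p$, and $0\in\cP_\Q$, it suffices to control, for each prime $p$, the $x\in\Q$ with $f_S(x)\in(\Q^{\times})^{p}$, together with the rational roots of $f_S$. For the denominator clearing: if $f\in\Z[X]$ is monic of degree $d$ and $x=p/q$ in lowest terms, then $F(p,q):=q^{d}f(p/q)$ is an integral binary form with $F(p,q)\equiv p^{d}\pmod q$, hence $\gcd(F(p,q),q)=1$, so $f(x)=w^{n}$ forces $F(p,q)$ to equal $\pm$ a perfect $n$-th power. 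Taking $h=(X-1)g+1$ gives, after homogenizing, $H(p,q)=(p-q)G(p,q)+q^{\deg h}$, whence $\gcd(G(p,q),H(p,q))=\gcd(G(p,q),q^{\deg h})=1$ and $G(p,q)$ alone is $\pm$ a perfect $n$-th power; and taking $g(X)=C_{0}(X)^{2}+1$ with $C_{0}\in\Z[X]$ vanishing at every $\beta_i$ and of even degree (so $4\mid\deg g$) makes $G(p,q)=\widehat{C_{0}}(p,q)^{2}+\big(q^{\deg g/4}\big)^{4}$ with $\gcd(\widehat{C_{0}},q)=1$. Since $g(\beta_i)=1$ and $h(\beta_i)=\beta_i$, we get $f_S(\beta_i)=\beta_i$, so $S\subseteq f_S(\Q)\cap\cP_\Q$.

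For every prime $p\ge 3$ the equation $x^{2}+y^{4}=z^{p}$ is resolved (Bruin for small $p$, Ellenberg and others for large $p$), with only the trivial primitive solutions $xy=0$. Now $G(p,q)=z^{p}$ with $z>0$ is a \emph{primitive} solution of $x^{2}+y^{4}=z^{p}$ — primitive since $\gcd(\widehat{C_{0}},q)=1$ forces $\gcd(\widehat{C_{0}},z)=1$ as well — so this leaves only $xy=0$; as $q\ge 1$ this means $\widehat{C_{0}}(p,q)=0$, i.e.\ $p/q$ is a root of $C_{0}$. Choosing the ``padding'' roots of $C_{0}$ (those not among the $\beta_i$) to be rationals that are not perfect powers, such a root would give $f_S(x)=g(x)h(x)=1\cdot x=x\notin\cP_\Q$, contradicting $f_S(x)=w^{p}$; hence $x=\beta_i$ and $f_S(x)=\beta_i\in S$.

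The exponent $p=2$ is the main obstacle, since $x^{2}+y^{4}=z^{2}$ has infinitely many primitive solutions and the $(2,4,n)$ input is vacuous. Here $G(p,q)=\widehat{C_{0}}(p,q)^{2}+\big(q^{\deg g/4}\big)^{4}=s^{2}$ factors as $(s-\widehat{C_{0}})(s+\widehat{C_{0}})=q^{\deg g}$, and from $\gcd(s-\widehat{C_{0}},\,s+\widehat{C_{0}})\mid 2$ and $\gcd(\widehat{C_{0}},q)=1$ one finds that $s\pm\widehat{C_{0}}$ are, up to a factor of $2$, pure powers of coprime divisors of $q$; substituting back and completing the square recasts the identity as a Pell equation $U^{2}-DV^{2}=N$, and the residual constraint (that $q$ come from an honest $p/q\in\Q$) becomes the requirement that a term of the associated non-degenerate binary recurrence be a perfect power, after a fixed shift and scaling. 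By the theorems of Peth\H{o} and of Shorey and Stewart this holds for only finitely many, effectively computable, terms; arranging the padding of $C_{0}$ so as to knock out the sporadic ones, one again gets $x\in\{\beta_i\}$ and hence $f_S(x)\in S$.

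What remains is bookkeeping, but it is where the real care lies. When $S\not\subseteq\cP_\Z$, $C_{0}$ cannot be taken monic, so neither can $f_S$; then $\gcd(F(p,q),q^{d})$ divides a fixed power of the leading coefficient of $f_S$, which replaces $x^{2}+y^{4}=z^{n}$ by finitely many twisted equations $x^{2}+y^{4}=cz^{n}$, handled by the corresponding generalized-Fermat results and, for $(c,n)$ beyond their range, again by the binary recurrence input. One must also ensure $h$ has no rational root unless $0\in S$ (a rational root $x_{0}$ would give $f_S(x_{0})=0\in\cP_\Q$): $h(x_{0})=0$ forces $q-p=\pm1$, hence $|p|$ a perfect square, leaving finitely many candidates to rule out by adjusting the padding of $C_{0}$, while if $0\in S$ one builds in such a root on purpose. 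The crux throughout is engineering the $p=2$ and twisted analyses to land squarely on the Peth\H{o}--Shorey--Stewart finiteness statement rather than on an uncontrolled higher-genus curve, and checking that the finite exceptional set so produced maps under $f_S$ exactly onto $S$.
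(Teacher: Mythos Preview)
Your overall architecture is right—keep $f_S=g\cdot h$ with $g(\beta_i)=1$, $h(\beta_i)=\beta_i$, homogenize, and feed the $G$-factor into a generalized Fermat equation—but three of the steps you label as routine are genuine gaps.

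\medskip

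\textbf{The exponent $n=2$.} With $g=C_0^{2}+1$ you obtain $\widehat{C_0}(p,q)^{2}+(q^{\deg g/4})^{4}=s^{2}$, and your factorisation gives, for each coprime splitting $q=d_1d_2$, a polynomial equation $\widehat{C_0}(p,d_1d_2)=\tfrac12\bigl(d_2^{\deg g}-d_1^{\deg g}\bigr)$ in the \emph{free} integer variable $p$. I do not see how ``substituting back and completing the square'' turns this two-parameter family $(d_1,d_2)$ into a single non-degenerate binary recurrence in one index; without that reduction the Peth\H{o}--Shorey--Stewart theorem simply does not bite. This is the central hole in the argument.

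\textbf{The exponent $n=3$.} Theorem~\ref{thm:Ellenberg} (and the Bruin and Bennett--Ellenberg--Ng papers you invoke) treat $x^{2}+y^{4}=z^{n}$ only for $n\ge 4$; signature $(2,4,3)$ has $\tfrac12+\tfrac14+\tfrac13>1$ and is spherical, so no finiteness theorem of that shape is available, and your assertion ``for every prime $p\ge 3$ \ldots only the trivial primitive solutions $xy=0$'' is unsupported.

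\textbf{The non-monic case.} When some $c_i>1$ you are forced into twisted equations $x^{2}+y^{4}=c\,z^{n}$ with $c$ running over divisors of a power of $\prod c_i$. There are no ``corresponding generalized-Fermat results'' for general $c$, and the recurrence theorem does not control these for varying $n$ either.

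\medskip

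The paper closes all three gaps with two small but decisive modifications to $g$ that you do not make. First, it takes $g(X)=\prod(c_iX-a_i)^{k}+1$ with $4\mid k$, so that after clearing denominators one has $B^{k}+w^{k}=z^{n}$ or $2z^{n}$ with $\gcd(B,w)=1$; writing $k=4\ell$ this is $A^{4}+B^{4}=C^{n}$ (Fermat for $n=2$, Lucas for $n=3$, Theorem~\ref{thm:Ellenberg} for $n\ge 4$) or $A^{4}+B^{4}=2C^{n}$ (reduced to the former by a one-line identity). Thus the $n=2$ and $n=3$ problems disappear entirely—no separate Pell analysis is needed. Second, it chooses $k$ divisible by $p-1$ for every prime $p\mid\prod c_i$; Fermat's little theorem then forces $\gcd(B^{k}+w^{k},v)$ to be a power of $2$, so the \emph{only} twists that ever occur are $c\in\{1,2\}$, and the unresolved equations $x^{2}+y^{4}=cz^{n}$ never arise. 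The binary-recurrence input is used in a far more limited way than you propose: it handles only the single leftover case $B^{k}+w^{k}=2$ (equivalently $|B|=w=1$), where $x$ lies in an explicit finite set $\{\delta_1,\dots,\delta_m\}$ and $f_S(x)=4x-2^{s+1}$; taking the shift in $h$ to be $2^{s}$ with $s$ large (rather than your $1$) and invoking Theorem~\ref{thm:ST} once for each $\delta_j$ rules these out.
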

We give a recipe for $f_S$ in terms of a certain effectively
computable (though currently inexplicit) constant.
Instead of Lebesgue's theorem, or Mih\u{a}ilescu's theorem, 
our proof that $f_S(\Q) \cap \cP_\Q=S$ crucially relies on
the following theorem due to Ellenberg, 
to Bennett, Ellenberg and Ng,
and to Bruin.
\begin{thm}[Ellenberg et al.]\label{thm:Ellenberg}
Let $n \ge 4$. Then the equation $a^2+b^4=c^n$
has no solutions in coprime non-zero integers.
\end{thm}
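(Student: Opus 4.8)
The plan is to reduce to prime (and a few small) exponents and then treat each case either by classical descent or by the modular method applied to a Frey $\Q$-curve. Since a primitive solution of $a^2+b^4=c^n$ with $n=mk$ yields a primitive solution of $a^2+b^4=(c^m)^k$, and every $n\ge 4$ is divisible by $4$ or by an odd prime, it suffices to rule out primitive solutions when $n=4$, when $n=p$ is an odd prime $\ge 5$, and, in order to cover the exponents $6,9,12,\dots$, to control the signature $(2,4,3)$ equation $X^2+Y^4=Z^3$, since a solution with $3\mid n$ descends to a primitive solution of the latter with $Z$ a proper perfect power. I would dispatch $n=4$ classically: $a^2+b^4=c^4$ gives $a^2=(c^2-b^2)(c^2+b^2)$, contradicting Fermat's theorem, proved by infinite descent, that $x^4-y^4$ is never a nonzero square.

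For $n=p$ an odd prime I would first note that $c$ is forced to be odd, since otherwise $c^p\equiv 0\pmod{8}$ whereas $a^2+b^4$ is odd; hence $a$ and $b$ have opposite parity, the Gaussian integers $a+b^2 i$ and $a-b^2 i$ are coprime in $\Z[i]$, and their product $c^p$ forces $a+b^2 i=u\,(m+n i)^p$ for a unit $u\in\Z[i]^\times$ and coprime $m,n\in\Z$. Comparing imaginary parts expresses $b^2$, up to sign, as one of a few explicit binary forms of degree $p$ in $m$ and $n$. For small $p$, in particular $p=5$, this realises $b^2$ as a value of a low-degree form, that is, as a rational point on an explicit curve of small genus, which I would show has no admissible points by Bruin's Chabauty-type methods using elliptic curves. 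The same explicit techniques treat the $(2,4,3)$ equation, whose primitive solutions Bruin determined; none of them has $Z$ a proper power (the sole sporadic one being $46^2+3^4=13^3$), so the exponents $6,9,12,\dots$ contribute nothing.

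For the remaining large primes $p$ I would pass to Galois representations. From $a+b^2 i=u\,\delta^p$ one attaches a Frey curve $E=E_\delta$ over $\Q(i)$ whose conductor, away from the primes dividing $abc$, is a power of the prime above $2$. This $E$ is a $\Q$-curve --- complex conjugation carries it to a curve isogenous to $E$, reflecting $\overline{a+b^2 i}=a-b^2 i$ --- so the theory of $\Q$-curves (Ribet, Pyle) lets one descend $\rhobar_{E,p}$ to a $2$-dimensional odd mod-$p$ representation of $\Gal(\Qbar/\Q)$, up to twist; this representation is irreducible for $p$ large and, by the available modularity of such residual representations, is modular, whereupon Ribet-style level lowering pins its Serre level to a fixed power of $2$ with quadratic nebentypus. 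Comparing with the essentially empty spaces of newforms of that level and character gives a contradiction once $p$ exceeds an explicit bound --- this is Ellenberg's argument --- and the finitely many intermediate primes are cleared by the refined modular computations of Bennett, Ellenberg and Ng.

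The step I expect to be the real obstacle is this last modularity input: one cannot simply quote the Taylor--Wiles theorem for elliptic curves over $\Q$, but must establish modularity of the residual representation attached to a $\Q$-curve over an imaginary quadratic field, which rests on the Ribet--Pyle descent of $\Q$-curves to abelian varieties of $\GL_2$-type together with known cases of Serre's conjecture; one must then make the level-lowering and elimination argument effective in $p$ and close by computation the gap of intermediate exponents.
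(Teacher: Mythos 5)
Two remarks, one about what your sketch shares with the paper and one about a genuine gap. First, be aware that the paper itself does not prove this theorem: it is quoted, with the attribution $n=6$ to Bruin, prime $n\ge 211$ to Ellenberg, and all remaining $n\ge 4$ to Bennett--Ellenberg--Ng. For the heart of the matter --- odd prime exponents --- your proposal does exactly what the paper does, namely defer to the Frey $\Q$-curve, modularity and level-lowering machinery of Ellenberg and of Bennett--Ellenberg--Ng, and you say so explicitly; that is acceptable as attribution, but two details of your description are off. The space of newforms at the relevant $2$-power level with quadratic character is \emph{not} essentially empty: the hard part of Ellenberg's and Bennett--Ellenberg--Ng's work is eliminating the forms that do occur (notably CM forms), and it is precisely there that the estimates on critical values of Hecke $L$-functions and the threshold $p\ge 211$ enter. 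Also, the modularity input in Ellenberg's paper is the Ellenberg--Skinner modularity theorem for $\Q$-curves (nowadays one may instead invoke Khare--Wintenberger), not merely ``known cases of Serre's conjecture'' as available at the time.

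The genuine gap is in your reduction of composite exponents. Exponents divisible by $4$ or by a prime $\ge 5$ reduce as you say, leaving $n$ of the form $2^a3^b$, i.e.\ the cases $6\mid n$ and $n=3^b$ with $b\ge 2$, which reduce to $n=6$ and $n=9$. You dispose of these by asserting that Bruin determined the primitive solutions of $X^2+Y^4=Z^3$, that the only sporadic one is $46^2+3^4=13^3$, and that none has $Z$ a proper power. This is false and is not in Bruin's cited paper: $(2,4,3)$ is a spherical signature ($\tfrac12+\tfrac14+\tfrac13>1$), so by Beukers' theorem the existence of the primitive solution $(46,3,13)$ forces infinitely many primitive solutions, organized into finitely many polynomial parametrized families; there is no finite list to inspect. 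What Bruin's 1999 paper actually proves is the nonexistence of nontrivial solutions to $X^2\pm Y^4=\pm Z^6$ (which settles $n=6$, hence all $n$ divisible by $6$) and to $X^2+Y^8=Z^3$. The exponent $9$ (hence all $3^b$ with $b\ge 2$) therefore requires a genuine separate argument --- for instance working through the parametrized families of $(2,4,3)$ and showing $Z$ cannot be a cube, or simply quoting the treatment in Bennett--Ellenberg--Ng, where this case is covered --- and as written your proof does not handle it.
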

The theorem is due to Bruin \cite{Bruin} for $n=6$,
who treats this case using an elliptic Chabauty argument.
It is due to Ellenberg \cite{Ellenberg24p} for prime $n \ge 211$,
and to Bennett, Ellenberg and Ng \cite{BEN}
for all other $n \ge 4$. These two papers make
use of deep results in theory of Galois representations
of $\Q$-curves as well as a careful study of 
critical values of Hecke $L$-functions of modular forms.

We shall also need a famous theorem 
on perfect powers
in non-degenerate binary recurrence sequences,
due to 
Peth\H{o} \cite{Petho} and independently to
Shorey 
and Stewart \cite{ShSt83}.
We shall not need the theorem in its full generality,
so we only state a special case.
\begin{thm}[Peth\H{o}, Shorey and Stewart]\label{thm:ST}
Let $a$, $b$, $\alpha$, $\beta$ be non-zero integers
with $\alpha \ne \pm \beta$. Let 
\[
	u_t \; = \; a \alpha^t + b \beta^t, \qquad t \in \Z, \qquad t \ge 0.
\]
Then there is an effectively
	computable constant $C(a,b,\alpha,\beta)$ 
	such that 
	$u_t \notin \cP_\Z$
	for $t > C(a,b,\alpha,\beta)$.
\end{thm}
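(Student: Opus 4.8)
The plan is to run the standard two-stage argument of Baker's method --- effective lower bounds for linear forms in logarithms, both Archimedean and $p$-adic --- taking advantage of the fact that for \emph{integers} the hypothesis $\alpha \neq \pm\beta$ forces $|\alpha| \neq |\beta|$, so that $u_t$ has a strictly dominant root; this removes the chief technical complication in the general theory of power values of binary recurrences.

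Assume without loss of generality $|\alpha| > |\beta| \geq 1$, so $|\alpha| \geq 2$. One first records the elementary bounds: for $t$ larger than an effective constant, $u_t \neq 0$ and $\frac{1}{2}|a|\,|\alpha|^t \leq |u_t| \leq 2|a|\,|\alpha|^t$. Suppose now $u_t = w^q \in \cP_\Z$ with $q \geq 2$ and $|w| \geq 2$ (if $|w| \leq 1$ then $|u_t| \leq 1$, impossible for large $t$). From $|w^q - a\alpha^t| = |b|\,|\beta|^t$ we get $\bigl| w^q/(a\alpha^t) - 1 \bigr| \leq c_1\theta^t$ with $\theta := |\beta/\alpha| \in (0,1)$, hence the linear form
\[
\Lambda_t := q\log|w| - t\log|\alpha| - \log|a|
\]
satisfies $0 < |\Lambda_t| \leq c_2\theta^t$, the non-vanishing because $w^q = \pm a\alpha^t$ would give $b\beta^t = 0$. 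In $\Lambda_t$ the coefficients $q, t$ are both $O(t)$ (since $q \leq \log|u_t|/\log 2$), the heights of $|a|$ and $|\alpha|$ are independent of $t$, and $h(|w|) = \log|w| = (t\log|\alpha| + O(1))/q$. Comparing Matveev's lower bound for $|\Lambda_t|$ with the upper bound $c_2\theta^t$ gives, after rearrangement, $q \leq c_3\log(2t)$: the exponent is at most logarithmic in $t$.

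To remove this slowly growing range of $q$ one passes to $p$-adic valuations. Fixing a prime $p \mid \alpha$ and comparing valuations in $w^q = a\alpha^t + b\beta^t$ --- using that $v_p(a\alpha^t)$ grows linearly in $t$ --- one obtains, from $w^q \mid u_t$, either an outright bound $q \leq v_p(b)$, or the information that $t$ lies in a fixed residue class modulo a divisor of $q$ together with a bound on $v_p\bigl((\alpha/\beta)^t + \text{const}\bigr)$; in the latter case a $p$-adic lower bound for a linear form in two logarithms (Yu, or Bugeaud--Laurent) applied to $\Lambda_t$ and its $p$-adic analogue, combined with $q \leq c_3\log(2t)$, caps $t$ effectively. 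If no convenient prime is available --- for instance $a = b = 1$ with $\alpha, \beta$ sharing all their prime factors --- one instead treats the boundedly many remaining exponents $q$ individually: splitting $t$ into residues modulo $q$ turns $u_t = w^q$ into an equation $A\xi^q + B\eta^q = w^q$ with $\xi, \eta$ powers of $\alpha, \beta$, i.e.\ a Thue--Mahler equation of degree $q$, solved effectively by Baker's method; this again bounds $t$. Collecting the cases yields the effective constant $C(a,b,\alpha,\beta)$.

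The step I expect to be the main obstacle is the treatment of the small exponents, in practice $q = 2$ and $q = 3$: there the Archimedean form is useless, since $\log|w| \approx (t\log|\alpha|)/q$ is then comparable to the coefficient $t$, so everything rests on the $p$-adic estimates or the Thue--Mahler reduction, and for $q = 2$ the latter is a Pell-type equation whose effective resolution is the most delicate point. Keeping all constants explicit throughout --- so that $C(a,b,\alpha,\beta)$ is genuinely effectively computable --- is where most of the labour lies.
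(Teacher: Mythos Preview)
The paper does not contain a proof of this theorem. It is quoted as a result from the literature, attributed to Peth\H{o} and to Shorey and Stewart, and the only remark the paper makes about its proof is the single sentence ``The theorem is proved using the theory of lower bounds for linear forms in logarithms of algebraic numbers.'' There is therefore nothing in the paper against which to compare your argument step by step.

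That said, your sketch is broadly in line with the method the paper alludes to and with the approach of the cited sources: one first uses an Archimedean linear form in logarithms to bound the exponent $q$ in terms of $\log t$, and then a second argument (a $p$-adic linear form, or a reduction to finitely many Thue-type equations for each residue of $t$ modulo $q$) to bound $t$ itself. A couple of points in your outline would need tightening before it became a proof. Your $p$-adic step begins ``fixing a prime $p \mid \alpha$'', which is fine since $|\alpha| \ge 2$, but the subsequent case split (``if no convenient prime is available'') is muddled: what you actually need is a prime dividing $\alpha$ to a different power than it divides $\beta$, and you should say what happens when every prime divides them equally (then $\alpha/\beta$ is a unit, i.e.\ $\pm 1$, which is excluded). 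Your Thue--Mahler fallback is also not quite a Thue--Mahler equation as written: after fixing $t \bmod q$ you obtain $A(\alpha^m)^q + B(\beta^m)^q = w^q$, but $\alpha^m$ and $\beta^m$ are not independent unknowns, so one has to argue differently (this is handled in the cited papers). Finally, your worry about $q=2$ is well placed; in the original treatments this case does require separate care.
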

The theorem is proved using 
the theory of lower bounds for linear forms
in logarithms of algebraic numbers.

\section{Preliminary results}
We shall need the following consequence of Theorem~\ref{thm:ST}.
\begin{lem}\label{lem:prelim}
Let $\gamma \in \Q$, $\gamma \ne 0$. Then there is an effectively 
	computable constant $D(\gamma)$
	such for $t > D(\gamma)$,
	\[
		\gamma - 2^t \notin \cP_\Q.
	\]
\end{lem}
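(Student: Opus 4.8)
The plan is to reduce the rational assertion to the integral one already contained in Theorem~\ref{thm:ST}. Write $\gamma = p/q$ in lowest terms with $q \ge 1$, and note $p \ne 0$ since $\gamma \ne 0$. Then $\gamma - 2^t = (p - q2^t)/q$, and any common prime factor of $p - q2^t$ and $q$ would divide $p$; hence $\gcd(p - q2^t, q) = 1$, so $(p - q2^t)/q$ is itself a fraction in lowest terms, with positive denominator $q$.

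Next I would record the following elementary fact about rational perfect powers: if a rational number $u/v$ written in lowest terms with $v > 0$ equals $(c/d)^n$ for some $n \ge 2$ and some $c/d$ in lowest terms with $d > 0$, then, since $(c/d)^n = c^n/d^n$ is again in lowest terms (as $\gcd(c^n, d^n) = 1$) with positive denominator $d^n$, uniqueness of the reduced representation forces $u = c^n$ and $v = d^n$; in particular $u = c^n \in \cP_\Z$. Applying this with $u = p - q2^t$ and $v = q$, we conclude that if $\gamma - 2^t \in \cP_\Q$, then $p - q2^t \in \cP_\Z$.

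Now set $u_t = p - q2^t = p\cdot 1^t + (-q)\cdot 2^t$. This is a non-degenerate binary recurrence sequence in the sense of Theorem~\ref{thm:ST}, with $a = p$, $\alpha = 1$, $b = -q$, $\beta = 2$, all non-zero, and $\alpha = 1 \ne \pm 2 = \pm\beta$. Theorem~\ref{thm:ST} therefore provides an effectively computable constant $C(p, -q, 1, 2)$ such that $u_t \notin \cP_\Z$ for $t > C(p, -q, 1, 2)$. Taking $D(\gamma)$ to be this constant — enlarged, if necessary, to exceed the at most one index $t$ for which $p - q2^t = 0$ — the reduction of the previous paragraph, in contrapositive form, yields exactly what we want: for $t > D(\gamma)$ we have $p - q2^t \notin \cP_\Z$, hence $\gamma - 2^t \notin \cP_\Q$. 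Since $p$ and $q$ are read off from $\gamma$, the constant $D(\gamma)$ is effectively computable.

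I expect that the only point requiring care — as opposed to any real difficulty — is the reduction in the second paragraph: one must be a little careful that a rational $n$-th power really does force the numerator of the reduced fraction to be an honest integral $n$-th power (the signs work out because $\gcd(c,d)=1$ makes $c^n/d^n$ already reduced, with $d^n>0$, so no sign ambiguity arises). Everything else is a direct application of Theorem~\ref{thm:ST} to the sequence $p - q2^t$.
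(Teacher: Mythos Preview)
Your proposal is correct and follows essentially the same route as the paper: write $\gamma$ in lowest terms, observe that $(p - q2^t)/q$ is already reduced so membership in $\cP_\Q$ forces the numerator into $\cP_\Z$, and then apply Theorem~\ref{thm:ST} with $(a,b,\alpha,\beta)=(p,-q,1,2)$. The paper makes the constant explicit as $D(\gamma)=\max\{0,\log_2|\gamma|,C(u,-v,1,2)\}$ (with $u/v$ playing the role of your $p/q$), where the $\log_2|\gamma|$ term serves exactly the purpose of your enlargement to rule out the single index with $p - q2^t = 0$.
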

\begin{proof}
Write $\gamma=u/v$ where $u$, $v \in \Z$, $u \ne 0$, $v \ge 1$
and $\gcd(u,v)=1$. We let 
	\[
		D(\gamma) \; =\; \max\{0,\log_2{\lvert \gamma \rvert},
		\;
		C(u,-v,1,2)
		\}
	\]
	where $C(a,b,\alpha,\beta)$ is as in Theorem~\ref{thm:ST}.
	Note that since $C(a,b,\alpha,\beta)$ is
	effectively computable, so is $D(\gamma)$.
	Let $t > D(\gamma)$. 
	We suppose $\gamma -2^t \in \cP_\Q$ and derive
	a contradiction.

	Since $t>\log_2{|\gamma|}$, we have $\gamma-2^t \ne 0$.
	Observe that $\gamma-2^t=(u-2^t v)/v$
	where the numerator $u-2^t v$
	and the denominator $v$ are coprime. As $\gamma -2^t \in \cP_\Q \setminus \{0\}$
we conclude that $u-2^t v \in \cP_\Z$.
	We apply Theorem~\ref{thm:ST}
	with $a=u$, $b=-v$, $\alpha=1$, $\beta=2$.
	Since $t>C(u,-v,1,2)$, the theorem tells us
	that $u-2^t v \notin \cP_\Z$ giving a contradiction.
\end{proof}

We also need the following two corollaries to 
Theorem~\ref{thm:Ellenberg}.
\begin{cor}\label{cor:Ellenberg1}
Let $n \ge 2$. Then the only solutions
to the equation $A^4+B^4=2C^n$ with $A$, $B$, $C \in \Z$
and $\gcd(A,B)=1$ satisfy $A=\pm 1$, $B=\pm 1$.
\end{cor}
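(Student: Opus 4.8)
The plan is to recast the equation as a generalized Fermat equation of signature $(2,4,n)$ and then invoke Theorem~\ref{thm:Ellenberg}. First I would record two routine reductions. Since $\gcd(A,B)=1$ and $A^4+B^4$ is even, both $A$ and $B$ are odd, hence so is $C$; and since $C^n$ is a perfect power, it suffices to treat the exponents $n=2$, $n=3$, and $n=p$ an odd prime with $p\ge5$ (if $n$ is even then $C^n=(C^{n/2})^2$, and if an odd prime $p$ divides $n$ then $C^n=(C^{n/p})^p$). Exploiting that $A,B$ are odd and coprime, I would then substitute $A=p+q$, $B=p-q$ with $p=(A+B)/2$ and $q=(A-B)/2$; these are coprime integers of opposite parity, and expanding $2C^n=A^4+B^4=2(p^4+6p^2q^2+q^4)$ yields
\[
	p^4+6p^2q^2+q^4 \;=\; C^n .
\]
Because $pq=0$ forces $\{|p|,|q|\}=\{0,1\}$ and then $A=p+q=\pm1$, $B=p-q=\pm1$, it is enough to show that this equation has no coprime solution with $pq\neq0$.

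Completing the square gives $p^4+6p^2q^2+q^4=(p^2+3q^2)^2-8q^4$, so the equation becomes
\[
	(p^2+3q^2)^2 - 2\,(2q^2)^2 \;=\; C^n ,
\]
with $\gcd(p^2+3q^2,\,2q)=1$: a primitive solution of a quadratic twist by $2$ of the signature $(2,4,n)$ equation. For the prime exponents $p\ge5$ I would carry out the standard descent for such equations — factor the left-hand side over $\Z[\sqrt2]$ (class number one), note that the conjugate factors are coprime because $C$ is odd, pin down the resulting unit of $\Z[\sqrt2]^\times$ using total positivity, and pass to $\Z[\zeta_8]=\Z[\sqrt2,i]$ (again class number one), where the factor $p^2+3q^2+2q^2\sqrt2=p^2+\bigl(q(1+\sqrt2)\bigr)^2$ becomes a sum of two squares. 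I expect this to reduce the problem precisely to the non-existence of coprime nonzero solutions of $a^2+b^4=c^p$, so that Theorem~\ref{thm:Ellenberg} (applicable since $p\ge5>4$) forces $q=0$ — a contradiction.

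It then remains to handle $n=2$ and $n=3$ by hand. The assertion that $p^4+6p^2q^2+q^4$ cannot be a perfect square for coprime $p,q$ with $pq\neq0$ — which simultaneously disposes of every even $n$, since $C^n$ is then a square — should follow from a classical Fermat-style infinite descent, chasing squares through the quadratic factorization of $(p^2+3q^2)^2-2(2q^2)^2$. The exponent $n=3$ is the most delicate small case: here I would either push a further descent over $\Z[\sqrt2]$ through to a contradiction or, if that stalls, resolve the resulting curve by an elliptic-curve Chabauty computation in the spirit of Bruin's treatment of the signature $(2,4,6)$ case. The step I expect to be the real obstacle is the prime-exponent descent: getting cleanly — with every $\gcd$ and every unit of $\Z[\sqrt2]$ accounted for — from $(p^2+3q^2)^2-2(2q^2)^2=C^p$ to the equation $a^2+b^4=c^p$ of Theorem~\ref{thm:Ellenberg}; by comparison the cases $n=2,3$ are elementary, if somewhat ad hoc.
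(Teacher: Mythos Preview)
Your proposal has a real gap at precisely the point you flag: the descent over $\Z[\sqrt2]$ and $\Z[\zeta_8]$ that is supposed to carry $(p^2+3q^2)^2-2(2q^2)^2=C^p$ to a coprime nonzero solution of $a^2+b^4=c^p$ is only hoped for, not established. Units in $\Z[\sqrt2]^\times$ form an infinite group, and extracting a \emph{rational}-integer relation of the required shape from a factorisation in $\Z[\zeta_8]$ is genuinely delicate; until that is actually carried out you have no proof for prime exponent $\ge5$. (You also reuse the letter $p$ for both the prime exponent and $(A+B)/2$.) The separate treatments of $n=2$ and $n=3$ are likewise only sketched.

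All of this machinery is unnecessary. The paper's argument is a two-line algebraic identity that handles every $n\ge2$ uniformly. With $A,B$ odd and coprime, set
\[
U=AB,\qquad V=\frac{A^4-B^4}{2}.
\]
Then $U,V$ are coprime integers and
\[
U^4+V^2=\Bigl(\frac{A^4+B^4}{2}\Bigr)^{2}=C^{2n}.
\]
Since $2n\ge4$, Theorem~\ref{thm:Ellenberg} applies directly and forces $UVC=0$; as $U$ is odd this gives $V=0$, hence $A^4=B^4$ and $A,B\in\{\pm1\}$. The trick you missed is that \emph{squaring} the right-hand side --- replacing the exponent $n$ by $2n$ --- simultaneously absorbs the coefficient $2$ and pushes the exponent into the range $\ge4$ where Ellenberg's theorem applies, so no descent, no unit-chasing, and no small-exponent case analysis is needed.
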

\begin{proof}
Note that $A$, $B$ are both odd. Write
\[
	U \; =\; AB, \qquad V \; =\; \frac{A^4-B^4}{2}.
\]
Then $U$, $V$ are coprime integers and satisfy
\[
	U^4+V^2 \; = \; \left(\frac{A^4+B^4}{2} \right)^2 \; = \; C^{2n}.
\]
	By Theorem~\ref{thm:Ellenberg} we have $UVC=0$.
	However, $U$ is odd so $U \ne 0$. Thus $C \ne 0$.
	Hence $V=0$, so $A^4=B^4$. Since $A$, $B$
	are coprime, $A=\pm 1$, $B=\pm 1$ as required.
\end{proof}

\begin{cor}\label{cor:Ellenberg2}
Let $n \ge 2$. Then the only solutions
to the equation $A^4+B^4=C^n$ with $A$, $B$, $C \in \Z$
and $\gcd(A,B)=1$ satisfy $A=0$, $B=\pm 1$, or $B=0$, $A=\pm 1$.
\end{cor}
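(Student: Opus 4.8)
The plan is to deduce Corollary~\ref{cor:Ellenberg2} from Theorem~\ref{thm:Ellenberg} by reducing to the coprime non-zero case. Suppose $A^4+B^4=C^n$ with $A,B,C\in\Z$, $\gcd(A,B)=1$, and $n\ge 2$. First I would dispose of the degenerate cases: if $C=0$ then $A^4=-B^4$, forcing $A=B=0$, which contradicts $\gcd(A,B)=1$; so $C\ne 0$. If $A=0$ then $B^4=C^n$, and coprimality with $A$ only says $\gcd(0,B)=|B|=1$, so $B=\pm 1$ (and similarly $B=0$ gives $A=\pm 1$). So from now on I may assume $A,B,C$ are all non-zero.

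The remaining task is to show this case is impossible. The obstruction is that Theorem~\ref{thm:Ellenberg} is only stated for exponents $n\ge 4$, whereas here $n$ can be $2$ or $3$; so I would reduce the exponent problem by passing to a multiple of $n$. Observe that if $A^4+B^4=C^n$ then also $A^4+B^4=(C^k)^{n/k \cdot k}$ type manipulations don't directly help, so instead I replace $n$ by $2n$ or $3n$ artificially: writing $a=A^2$, $b=B$, $c=C$, the equation $a^2+b^4=c^n$ already has the shape of Theorem~\ref{thm:Ellenberg} once $n\ge 4$. For $n=2$ the equation is $A^4+B^4=C^2$, i.e. $B^4+ (C)^2 = (A^2)^2$... better: view it as $(A^2)^2+(B^2)^2=C^2$ only if exponents matched, which they don't. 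The clean route for small $n$: if $n=2$, then $A^4+B^4=C^2$ has only the trivial solutions by the classical Fermat descent (a special elementary case), giving $AB=0$, a contradiction with $A,B$ non-zero. If $n=3$, then $A^4+B^4=C^3=(C)^3$; cube both... instead note $A^4+B^4 = C^3$ implies $(A^4+B^4)^2 = C^6$, so with $a = A\cdot(A^4+B^4)^{?}$ this is getting messy. The correct move is: for any $n\ge 2$ we have $A^4+B^4=C^n$ $\implies$ $A^4+B^4 = (C^{\lceil 4/n\rceil})^{?}$—cleanest is to raise to a power making the exponent $\ge 4$: $(A^4+B^4)^m = C^{nm}$ with $nm\ge 4$, but the left side is no longer a sum of a square and a fourth power. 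So the genuinely clean argument is to treat $n\in\{2,3\}$ by hand (Fermat's classical $x^4+y^4=z^2$ result covers $n=2$ and, after cubing appropriately or by a short direct descent, $n=3$), and to apply Theorem~\ref{thm:Ellenberg} directly with $a=A^2$, $b=B$, $c=C$ for $n\ge 4$, concluding $A^2\cdot B\cdot C=0$, hence $AB=0$, contradicting non-vanishing.

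I expect the main obstacle to be handling the exponents $n=2$ and $n=3$, since Theorem~\ref{thm:Ellenberg} explicitly requires $n\ge 4$. For $n=2$ this is precisely Fermat's theorem that $X^4+Y^4=Z^2$ has no solution in non-zero integers, which I would simply cite. For $n=3$, the equation $A^4+B^4=C^3$ in coprime non-zero integers can be attacked by observing that $(A^4+B^4)$ is then a perfect cube while also $A^4+B^4=C^3$ forces, after multiplying through, that $(A^4+B^4)^2 = C^6 = (C^2)^3$ is simultaneously... honestly the slickest fix is: replace the exponent by noting $A^4+B^4=C^3\implies A^{12}+ (\text{stuff})$ fails, so instead use that $x=A^2,\ y=B^2$ gives $x^2+y^2=C^3$, which is a sum of two squares being a cube — handled by factoring in $\Z[i]$, where coprimality of $A,B$ and the primitive-divisor/unique-factorization structure force $x$ or $y$ to vanish. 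Thus the write-up splits cleanly into the case $n\ge 4$ (immediate from Theorem~\ref{thm:Ellenberg} applied to $a=A^2$, $b=B$) and the finitely many small cases $n=2,3$ handled by classical descent in $\Z$ and $\Z[i]$ respectively.
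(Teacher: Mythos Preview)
Your overall decomposition matches the paper exactly: handle $n=2$ by Fermat's classical descent on $X^4+Y^4=Z^2$, handle $n=3$ separately, and for $n\ge 4$ write $(A^2)^2+B^4=C^n$ and invoke Theorem~\ref{thm:Ellenberg}. The degenerate-case bookkeeping you add (ruling out $C=0$, reading off $B=\pm1$ when $A=0$, etc.) is fine and the paper simply leaves it implicit.

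The gap is your treatment of $n=3$. Your claim that factoring $x^2+y^2=C^3$ in $\Z[i]$ ``forces $x$ or $y$ to vanish'' is false as stated: $2^2+11^2=5^3$ is a coprime non-trivial solution. What makes $A^4+B^4=C^3$ hard is precisely the extra constraint that $x=A^2$ and $y=B^2$ are both squares; unique factorisation in $\Z[i]$ then only gives you a system like $A^2=m(m^2-3n^2)$, $B^2=n(3m^2-n^2)$ (up to units), and ruling out non-trivial $(m,n)$ is a genuine descent on genus-one curves, not a one-line consequence of coprimality. The paper does not attempt this and instead cites a classical result of Lucas (via \cite[Section~5]{BCDY}) for the case $n=3$. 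You should do the same, or else actually carry out the Lucas-type descent; the sketch you have now does not close the case.
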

\begin{proof}
For $n=2$ this is a famous result of Fermat, proved by infinite descent.
	For $n=3$ it is in fact a result of Lucas 
	\cite[Section 5]{BCDY}.
	Suppose $n \ge 4$. Then we can rewrite the
	equation as $(A^2)^2+B^4=C^n$
	and conclude from Theorem~\ref{thm:Ellenberg}.
\end{proof}

\section{Proof of Theorem~\ref{thm:main}}\label{sec:proof}
Let $S$ be a finite subset of  $\cP_\Q$. 
We would like to give a polynomial $f_S \in \Z[X]$
such that $f_S(\Q) \cap \cP_\Q=S$.
For $S=\emptyset$
we may take any constant polynomial whose value is not a perfect power. Thus we may suppose $S \ne \emptyset$. 
We write
\[
	S\; = \; \{ \beta_1,\dotsc,\beta_r \}
\]
where $r \ge 1$, and $\beta_i \in \cP_\Q$. 
As the $\beta_i$ are 
rational numbers, we can write
\[
	\beta_i \; = \; \frac{a_i}{c_i}, \qquad
a_i,~c_i \in \Z, \qquad c_i \ge 1,  \qquad \gcd(a_i,c_i)=1.
\]
Let $p_1,p_2,\dotsc,p_t$ be the distinct prime
divisors of $\prod_{i=1}^nc_i$, and let
\begin{equation}\label{eqn:kdef}
	k\; = \; \lcm(4,p_1 -1, p_2-1, \dotsc,p_t-1).
\end{equation}
Let
\begin{equation}\label{eqn:F}
	F(X) \; =\; \prod_{i=1}^r (c_i X-a_i)^2 \; -1.
\end{equation}
Since $c_i \ge 1$ we note that $F$ has degree $2r$,
and hence at most $2r$ roots. We are only interested
in the non-zero rational roots of $F$,  and we let these 
be  $\delta_1,\delta_2,\dotsc,\delta_m$.
We let $s$ be an integer satisfying the following:
\begin{gather}
	\text{$s \, =\, 2^{\kappa}-1$ for some $\kappa \ge 1$},  \label{eqn:sdef1}\\
	s \; \ge \; D(4\delta_j), \qquad  j=1,\dotsc,m.
\label{eqn:sdef3}
\end{gather}
Here $D(\cdot)$ is as in Lemma~\ref{lem:prelim}.
It is clear that such an $s$ is effectively computable.
Let 
\begin{gather}
	g(X) \; =\; \prod_{i=1}^r (c_i X-a_i)^k \; + \; 1,
	\qquad
	h(X) \; = \; (X-2^s)g(X) \, + \, 2^s, \label{eqn:ghdef} \\
	f_S(X) \; = \; g(X) \cdot h(X) \; \in \Z[X].
	\label{eqn:fSdef}
\end{gather}
To prove Theorem~\ref{thm:main}
we need to show that $f_S(\Q) \cap \cP_\Q=S$.
Observe that $g(\beta_i)=1$, $h(\beta_i)=\beta_i$
and so $f_S(\beta_i)=\beta_i$.
Thus $\beta_i \in f_S(\Q) \cap \cP_\Q$, and therefore
$S \subseteq f_S(\Q) \cap \cP_\Q$.
We would like to prove the reverse inclusion.
Thus let $x \in \Q$ and suppose that $f_S(x) \in \cP_\Q$.
To prove Theorem~\ref{thm:main} it will
be enough to show that $x=\beta_i$ for some $1 \le i \le r$.

Write
\begin{equation}\label{eqn:uvdef}
	x=\frac{u}{v}, \qquad u,~v \in \Z, \quad v \ge 1, \quad
	\gcd(u,v)=1.
\end{equation}
Let
\begin{equation}\label{eqn:ABdef}
	A\; =\; \prod_{i=1}^r (c_i u - a_i v), \qquad
	B\; = \; \frac{A}{\gcd(A,v^r)}, \qquad w \; = \; \frac{v^r}{\gcd(A,v^r)},
\end{equation}
and note that 
\begin{equation}\label{eqn:gcdBw}
	\gcd(B,w)=1, \qquad w \ge 1.
\end{equation}
Recall that we would like to show that $x=\beta_i$ 
(or equivalently $u/v=a_i/c_i$) for some $1 \le i \le r$.
This is equivalent to $A=0$, which is equivalent to 
$B=0$. Thus our proof will be complete on showing that $B=0$.

We note that
\begin{equation}\label{eqn:gh}
	g(x) \; = \; \frac{A^k + v^{kr}}{v^{kr}} \; = \; 
	\frac{B^k+w^k}{w^k},
\qquad
	h(x) \; = \; \frac{(u-2^s v) \cdot (B^k+w^k) + 2^s v w^k}{v w^k}.
\end{equation}
\begin{lem}\label{lem:2pow}
$\gcd(B^k+w^k,v)$ is a power of $2$.
\end{lem}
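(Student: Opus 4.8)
The statement to prove is that $\gcd(B^k+w^k, v)$ is a power of $2$. Recall that $k = \lcm(4, p_1-1, \ldots, p_t-1)$ where $p_1, \ldots, p_t$ are the primes dividing $\prod c_i$, and that $v$ is the denominator of $x = u/v$. The plan is to show that any odd prime $p$ dividing both $v$ and $B^k + w^k$ leads to a contradiction.

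First I would observe that if a prime $p$ divides $v$, then since $\gcd(u,v)=1$ we have $p \nmid u$. Now $A = \prod_{i=1}^r (c_i u - a_i v) \equiv \prod_i c_i u \pmod p$. I claim $p$ cannot divide any $c_i$: the primes dividing $\prod c_i$ are exactly $p_1, \ldots, p_t$, and I would argue (this is the crucial point) that none of these can divide $v$. Indeed, if some $p_j \mid v$, then... hmm, actually I need to think about why $p_j \nmid v$. Wait — there is no a priori constraint forcing $p_j \nmid v$; $x$ is an arbitrary rational number. Let me instead not try to exclude the $p_j$ from $v$, but rather handle the two cases: either $p \mid c_i$ for some $i$ (so $p = p_j$ for some $j$), or $p \nmid c_i$ for all $i$.

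\textbf{Case 1: $p \nmid c_i$ for all $i$.} Then $A \equiv u^r \prod_i c_i \not\equiv 0 \pmod p$, so $p \nmid A$, hence $p \nmid B$ (as $B$ divides $A$). Since $p \mid v$ and $p \nmid A$, the prime $p$ divides $\gcd(A, v^r)$ to order $0$, so $v^r$ and $w = v^r/\gcd(A,v^r)$ have the same $p$-adic valuation, which is positive; thus $p \mid w$. But $\gcd(B,w)=1$ forces $p \nmid B$, consistent, and now $B^k + w^k \equiv B^k \pmod p$ with $p \nmid B$, so $p \nmid B^k + w^k$, contradicting $p \mid \gcd(B^k+w^k, v)$.

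\textbf{Case 2: $p = p_j$ divides some $c_i$.} Here $p \mid v$ and $p$ is odd, and I want to derive a contradiction, or rather show this case forces $p$ to still not divide $B^k + w^k$. Since $p \mid v$ and $\gcd(u,v) = 1$, we have $p \nmid u$. Consider the valuation $\ord_p(A)$ versus $\ord_p(v^r)$. Write $B = A/\gcd(A, v^r)$, $w = v^r/\gcd(A,v^r)$. If $\ord_p(A) \geq \ord_p(v^r) = r\ord_p(v)$, then $p \nmid w$; if $\ord_p(A) < \ord_p(v^r)$ then $p \nmid B$. In the first subcase $B^k + w^k \equiv B^k \pmod p$ and we'd need $p \mid B$. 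Now $p \mid B$ means $\ord_p(A) > \ord_p(v^r)$, so $p \mid \prod(c_i u - a_i v)$; since $p \nmid u$ and $p \mid v$, the factor $c_i u - a_i v \equiv c_i u \pmod p$, which is $\equiv 0 \pmod p$ only if $p \mid c_i$. So $p \mid B$ can happen only if $p \mid c_{i_0}$ for some $i_0$. But then $\ord_p(c_{i_0} u - a_{i_0} v) \geq 1$ requires examining further: $c_{i_0} u - a_{i_0} v$ with $p \mid c_{i_0}$, $p \mid v$ gives $p \mid (c_{i_0} u - a_{i_0} v)$ automatically. So this subcase is genuinely possible, and I need the key arithmetic input: $B^k + w^k \equiv B^k \pmod{p}$, and I must instead look modulo a higher power of $p$, OR — and this is cleaner — in the second subcase $p \nmid B$, so $B^k + w^k \equiv B^k + w^k$ with $p \mid w$, giving $B^k + w^k \equiv B^k \not\equiv 0$.

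\textbf{The genuinely hard subcase and the role of $k$.} The only surviving danger is: $p = p_j \mid c_{i_0}$, $p \mid v$, $p \mid B$, $p \mid w$ — but this contradicts $\gcd(B,w)=1$. So in fact \emph{one of $B, w$ is coprime to $p$}, and in either situation $B^k + w^k$ reduces mod $p$ to the $k$-th power of the unit among $\{B, w\}$, which is nonzero mod $p$. Hence the choice $k = \lcm(4, p_1-1, \ldots, p_t-1)$ is \emph{not} actually needed for \emph{this} lemma (its role, one infers, is in a later step where $B^k$, $w^k$ must be recognized as $k$-th powers in $\Q_p$ or where one uses $z^k \equiv 1 \pmod{p}$ for units $z$). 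Let me restate the clean argument:

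\begin{proof}[Proof of Lemma~\ref{lem:2pow}]
Let $p$ be an odd prime dividing $v$; we must show $p \nmid B^k + w^k$. Since $\gcd(B,w)=1$ by \eqref{eqn:gcdBw}, at least one of $B$, $w$ is not divisible by $p$. We show $p \mid B$ or $p \mid w$ according to cases, and in either case conclude.

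Suppose first $p \nmid B$. Since $p \mid v$ and $v \geq 1$, we have $\ord_p(v) \geq 1$, hence $\ord_p(v^r) \geq 1$. As $B = A/\gcd(A,v^r)$ has $\ord_p(B) = \ord_p(A) - \min(\ord_p(A), \ord_p(v^r)) = 0$, we get $\ord_p(A) \geq \ord_p(v^r) \geq 1$, so $\ord_p(w) = \ord_p(v^r) - \min(\ord_p(A),\ord_p(v^r)) = \ord_p(v^r) - \ord_p(v^r) $; wait, this gives $\ord_p(w)=0$, contradicting what we want. Let me recompute: if $\ord_p(A) \geq \ord_p(v^r)$ then $\gcd$ has valuation $\ord_p(v^r)$, so $\ord_p(w) = 0$. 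So $p \nmid B$ forces $p \nmid w$ too — impossible since $p \mid v$ would then have to "go somewhere". Actually $w = v^r/\gcd(A,v^r)$: if $p \nmid w$, fine, that's allowed. Then $B^k + w^k$ with $p \nmid B$, $p \nmid w$ — we cannot immediately conclude. This is the obstacle: I would instead show $p \nmid A$, forcing the $p$-part of $v^r$ into $w$. Note $p \mid v$, $p \nmid u$. If $p \nmid c_i$ for all $i$, then $c_i u - a_i v \equiv c_i u \not\equiv 0 \pmod p$, so $p \nmid A$, giving $\ord_p(w) = \ord_p(v^r) \geq 1$, so $p \mid w$ and $p \nmid B$, whence $B^k + w^k \equiv B^k \not\equiv 0 \pmod p$. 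If instead $p \mid c_{i_0}$, then $p = p_j$ for some $j$, so $(p-1) \mid k$. Here I claim $p \nmid u$ and the structure of $g$ forces $p \mid B^k + w^k$ to be impossible by using that $k$-th powers are $\equiv 0$ or $\equiv 1 \pmod p$: precisely, $B^k \equiv 0$ or $1$ and $w^k \equiv 0$ or $1 \pmod p$, with not both $\equiv 0$ (as $\gcd(B,w)=1$), so $B^k + w^k \equiv 1$ or $2 \pmod p$; since $p$ is odd, $p \nmid B^k + w^k$. This completes the proof.
\end{proof}

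So the \textbf{key step} is the case split on whether the odd prime $p \mid v$ divides some $c_i$ (i.e. equals some $p_j$): if not, a direct congruence shows $p \nmid A$ hence $p \mid w$, $p \nmid B$; if so, the divisibility $(p-1) \mid k$ makes every $k$-th power $\equiv 0$ or $1$ mod $p$, and coprimality of $B, w$ plus oddness of $p$ finishes it. The \textbf{main obstacle} is realizing that one cannot avoid the subcase $p \mid c_i$ and must invoke Fermat's little theorem via the design choice \eqref{eqn:kdef} of $k$ — that is precisely why $k$ was built to be divisible by every $p_j - 1$.
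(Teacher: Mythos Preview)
Your final argument is correct and uses the same key idea as the paper: when an odd prime $p$ divides $v$ and some $c_i$, the divisibility $(p-1)\mid k$ forces $B^k,w^k\in\{0,1\}\pmod p$, and coprimality of $B,w$ together with $p$ odd gives $p\nmid B^k+w^k$. The paper organizes this slightly differently---it first uses the relation $A^k+v^{kr}=\gcd(A,v^r)^k\,(B^k+w^k)$ to deduce $p\mid A$ and hence $p\mid c_i$, thereby eliminating your Case~1 at the outset rather than handling it separately---but the substance is the same.
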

\begin{proof}
Let $p$ be an odd prime dividing both $B^k+w^k$ and $v$.
	From \eqref{eqn:ABdef} we see that $(B^k+w^k)$
	divides $A^k+v^{kr}$. Thus $p \mid (A^k+v^{kr})$
	and so $p \mid A$. 
	From the product definition of $A$ in \eqref{eqn:ABdef},
	and since $\gcd(u,v)=1$ from \eqref{eqn:uvdef}, we see that $p \mid c_i$
	for some $1 \le i \le r$. It follows from the definition of $k$ in \eqref{eqn:kdef}
	that $(p-1) \mid k$. Since $\gcd(B,w)=1$ \eqref{eqn:gcdBw}, we have that $p\nmid w$ and $p\nmid B$, so $B,w\in \mathbb{Z}_p^*$. Then we have $B^k+w^k\equiv 0\mod p$, so $(Bw^{-1})^k\equiv -1\mod p$. This contradicts Fermat's little theorem.
\end{proof}

\begin{lem}\label{lem:not2}
$B^k+w^k \ne 2$.
\end{lem}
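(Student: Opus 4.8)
The plan is to argue by contradiction: assume $B^k+w^k=2$ and show this forces $x$ into one of two very rigid positions, each of which contradicts the standing hypothesis $f_S(x)\in\cP_\Q$.

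First I would use the crude size estimate. Since $4\mid k$ by \eqref{eqn:kdef}, the exponent $k$ is even and $\ge 4$; together with $w\ge 1$ from \eqref{eqn:gcdBw} this gives $w^k\ge 1$ and $B^k\ge 0$, so $B^k+w^k=2$ is possible only with $w=1$ and $B^k=1$, i.e.\ $w=1$ and $B=\pm 1$. Feeding this back into \eqref{eqn:ABdef}: $w=1$ means $v^r\mid A$, and then $B=A/v^r=\pm 1$ gives $A=\pm v^r$. Dividing through by $v^r$ we get $\prod_{i=1}^r(c_ix-a_i)=\pm 1$, hence $F(x)=0$ by \eqref{eqn:F}. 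So $x$ is a rational root of the (nonzero, degree $2r$) polynomial $F$, and therefore $x=0$ or $x=\delta_j$ for some $j\in\{1,\dots,m\}$.

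Next I would evaluate $f_S$ at such an $x$. From $w=1$, $B^k+w^k=2$ and \eqref{eqn:gh} we have $g(x)=2$, so \eqref{eqn:ghdef} gives $h(x)=(x-2^s)\cdot 2+2^s=2x-2^s$ and $f_S(x)=g(x)h(x)=4x-2^{s+1}$; recall $s+1=2^{\kappa}$ by \eqref{eqn:sdef1}. If $x=\delta_j$, then $f_S(x)=4\delta_j-2^{s+1}$ with $4\delta_j\ne 0$; since $s\ge D(4\delta_j)$ by \eqref{eqn:sdef3} we have $s+1>D(4\delta_j)$, so Lemma~\ref{lem:prelim} gives $4\delta_j-2^{s+1}\notin\cP_\Q$, contradicting $f_S(x)\in\cP_\Q$. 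If instead $x=0$, then $v=1$, so $A=\prod_{i=1}^r(-a_i)=\pm v^r=\pm 1$, forcing each $a_i=\pm 1$; hence (using $k$ even) $g(0)=2$ indeed, and $f_S(0)=-2^{s+1}=-2^{2^{\kappa}}$. This is not a rational perfect power: a perfect‑power representation of the negative number $-2^{2^{\kappa}}$ would require an odd exponent $n\ge 3$, but $2^{2^{\kappa}}$ is not an $n$‑th power of a rational for any such $n$. So again $f_S(x)\notin\cP_\Q$, a contradiction. In all cases $B^k+w^k\ne 2$.

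I do not expect a real obstacle here: the constant $s$ was engineered, via \eqref{eqn:sdef3} and \eqref{eqn:sdef1}, precisely so that Lemma~\ref{lem:prelim} (which already absorbs the deep input of Peth\H{o}, Shorey and Stewart) applies at $t=s+1$ to the finitely many roots $\delta_j$. The only thing one must notice is the algebraic identity $A=\pm v^r\iff F(x)=0$, which routes the ``bad'' case into those roots; the residual possibility $x=0$ is then killed by a one‑line non‑power check.
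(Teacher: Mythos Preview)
Your argument is correct and follows essentially the same route as the paper: from $B^k+w^k=2$ deduce $|B|=w=1$, hence $g(x)=2$ and $f_S(x)=4x-2^{s+1}$, then use $A=\pm v^r$ to make $x$ a rational root of $F$, and eliminate the nonzero roots via Lemma~\ref{lem:prelim} and the root $x=0$ via the parity of $s+1=2^{\kappa}$. The only cosmetic differences are the order of steps (the paper disposes of $x=0$ first) and your unnecessary digression that each $a_i=\pm 1$ in the $x=0$ case.
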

\begin{proof}
Recall that $k$ is an even integer by \eqref{eqn:kdef},
and that $w \ge 1$ from \eqref{eqn:gcdBw}.
Suppose $B^k+w^k=2$. 
Then $\lvert B \rvert=w=1$.
In particular, from \eqref{eqn:gh} we have $g(x)=2$.
Thus 
\begin{equation}\label{eqn:fS0}
	f_S(x)\; = \; g(x) \cdot h(x) \; = \; 4x-2^{s+1}
\end{equation}
	from \eqref{eqn:ghdef} and \eqref{eqn:fSdef}.
First we show that $x \ne 0$. 
	Recall our assumption that $f_S(x) \in \cP_\Q$,
	and so $f_S(x)=y^n$ for some rational $y$
	and some integer $n \ge 2$.
	If $x=0$, then $y^n=-2^{s+1}=-2^{2^\kappa}$
	by \eqref{eqn:sdef1}. Thus $n \mid 2^{\kappa}$
	and so $n$ is even. Hence $y^n >0$
	giving a contradiction. We conclude that $x \ne 0$.

	Recall, from \eqref{eqn:uvdef}, that $v \ge 1$
	and $x=u/v$. From \eqref{eqn:ABdef}, as $\lvert B \rvert=w=1$,
we see that 
\[
	\lvert A \rvert \; = \; \gcd(A,v^r) \; = \; v^r.
\]
Thus $A = \pm v^r$.  
Dividing both sides of $A = \pm v^r$ by $v^r$ gives
\[
	\prod_{i=1}^r (c_i x- a_i) \; = \; \frac{A}{v^r} \; =\; \pm 1,
\]
	from the product expression for $A$ in \eqref{eqn:ABdef}.
Thus $x$ is a non-zero root of the polynomial $F(X)$
given in \eqref{eqn:F}. We have previously
labelled the roots of $F$ by $\delta_1,\delta_2,
\dotsc,\delta_m$. Thus $x=\delta_j$ for some $1 \le j \le m$.
	By \eqref{eqn:sdef3} we have
	$s+1 > D(4 \delta_j)$. 
	Hence, by Lemma~\ref{lem:prelim}
	and \eqref{eqn:fS0} we have
	$f_S(x) =4x-2^{s+1}=4 \delta_j-2^{s+1} \notin \cP_\Q$,
	giving a contradiction.
\end{proof}

\begin{lem}\label{lem:zero}
If $f_S(x)=0$ then $B=0$.
\end{lem}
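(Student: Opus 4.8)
The plan is to reduce the hypothesis $f_S(x)=0$ to the single equation $h(x)=0$, convert that into a divisibility statement about $B^k+w^k$, and then observe that Lemmas~\ref{lem:2pow} and~\ref{lem:not2} together leave $B=0$ as the only possibility.

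First I would check that $g$ cannot vanish: by the second formula in \eqref{eqn:gh} we have $g(x)=(B^k+w^k)/w^k$, and since $k$ is even (by \eqref{eqn:kdef}) and $w\ge 1$ (by \eqref{eqn:gcdBw}), the numerator satisfies $B^k+w^k\ge 1$. Hence $g(x)\neq 0$, so $f_S(x)=g(x)h(x)=0$ forces $h(x)=0$.

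Next I would use the formula for $h(x)$ in \eqref{eqn:gh} to rewrite $h(x)=0$ as $(u-2^sv)(B^k+w^k)=-2^svw^k$; in particular $B^k+w^k$ divides $2^svw^k$. Since $\gcd(B,w)=1$ we have $\gcd(B^k+w^k,w^k)=1$, so the odd part $M$ of $B^k+w^k$ divides $vw^k$ and, being coprime to $w^k$, divides $v$. Then $M\mid\gcd(B^k+w^k,v)$, which is a power of $2$ by Lemma~\ref{lem:2pow}; as $M$ is odd this gives $M=1$, i.e. $B^k+w^k$ is a power of $2$.

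To conclude I would argue by contradiction, supposing $B\neq 0$. Then $B^k\ge 1$ and $w^k\ge 1$, so $B^k+w^k$ is a power of $2$ that is at least $2$, hence even; therefore $B^k$ and $w^k$ have the same parity, so $B$ and $w$ do, and being coprime both are odd. For odd integers raised to the even power $k$ one has $B^k\equiv w^k\equiv 1\pmod{8}$, so $B^k+w^k\equiv 2\pmod{8}$, forcing $B^k+w^k=2$ and contradicting Lemma~\ref{lem:not2}. Hence $B=0$. I do not anticipate a genuine obstacle: the substantive work is already done by the two preceding lemmas, and the only care required is the elementary parity/$2$-adic bookkeeping in the final step.
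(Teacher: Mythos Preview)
Your proof is correct and follows essentially the same route as the paper: show $g(x)\neq 0$, deduce $(u-2^sv)(B^k+w^k)=-2^svw^k$, use coprimality with $w$ together with Lemma~\ref{lem:2pow} to force $B^k+w^k$ to be a power of $2$, and then use parity plus Lemma~\ref{lem:not2} to finish. The only cosmetic differences are that the paper argues directly with an odd prime $p\mid B^k+w^k$ (rather than via the odd part $M$) and observes $4\nmid B^k+w^k$ to get $B^k+w^k\in\{1,2\}$ (rather than your mod~$8$ computation), concluding $B^k+w^k=1$ directly instead of by contradiction.
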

\begin{proof}
	Suppose $f_S(x)=0$ and recall that $f_S(x)=g(x)h(x)$.
	Note that $k$ is even by \eqref{eqn:kdef},
	and also $w \ge 1$ by \eqref{eqn:gcdBw}.
	Thus $B^k+w^k$ is a positive integer
	and hence, from \eqref{eqn:gh}
	we have $g(x) \ne 0$.
	Hence $h(x)=0$,
	whence from \eqref{eqn:gh},
	\begin{equation}\label{eqn:-vw}
		(u-2^s v) \cdot (B^k+w^k) \; = \; -2^s v w^k.
	\end{equation}
	We claim that $B^k+w^k$ is a power of $2$. To prove
	this claim let $p \mid (B^k+w^k)$ be an odd prime.
	As $\gcd(B,w)=1$ by \eqref{eqn:gcdBw},
	we have $p \nmid w$.
	From Lemma~\ref{lem:2pow} we note that $p \nmid v$.
	However, from \eqref{eqn:-vw}, we have $p \mid 2^s v w^k$
	giving a contradiction.
	Thus $B^k+w^k$ is indeed a power of $2$.
	Since $k$ is even and $\gcd(B,w)=1$ we see
	that $4 \nmid (B^k+w^k)$ and so
	$B^k+w^k=1$ or $2$. 
	However, $B^k+w^k \ne 2$
	from Lemma~\ref{lem:not2}.
	We conclude that $B^k+w^k=1$.
	As $w \ge 1$, we obtain $w=1$ and $B=0$
	as required.
\end{proof}

Recall that we have supposed that $f_S(x) \in \cP_\Q$
and to complete the proof of Theorem~\ref{thm:main},
it is enough to show that $B=0$.
Lemma~\ref{lem:zero} establishes this if $f_S(x)=0$.
Thus we may suppose $f_S(x) \ne 0$. 
Hence $f_S(x)=y^n$ where $y$ is a non-zero rational and $n \ge 2$.
We claim that $B^k+w^k=z^n$ or $2 z^n$ for some odd positive 
integer $z$. To prove this let $p$ be an odd prime dividing 
$B^k+w^k$. Then, as before $p \nmid w$ since $\gcd(B,w)=1$, 
and $p\nmid v$ from Lemma~\ref{lem:2pow}.
From the expression for $h(x)$ in \eqref{eqn:gh}
we see that $p$ divides neither the numerator nor the denominator of $h(x)$,
and hence $\ord_p(h(x))=0$. 
However $g(x)h(x)=f_S(x)=y^n$, so
\[
	\ord_p(B^k+w^k)
	\; = \; \ord_p\left(\frac{B^k+w^k}{w^k} \right)\; =\; \ord_p(g(x)) \; = \; \ord_p(y^n) \; \equiv\; 0 \pmod{n}.
\]
As this is true for every odd prime dividing $B^k+w^k$
we have $B^k+w^k=2^e z^n$ for some $e \ge 0$ and some odd integer $z$.
Now as before $4 \nmid (B^k+w^k)$, since $k$ is
even and $\gcd(B,w)=1$. Thus $B^k+w^k=z^n$
or $B^k+w^k=2 z^n$ where $z$ is odd.

Suppose first that $B^k+w^k=2z^n$. 
Recall that $4 \mid k$ by \ref{eqn:kdef}.
By Corollary~\ref{cor:Ellenberg1} we have $B=\pm 1$, $w=1$,
contradicting Lemma~\ref{lem:not2}.

Thus $B^k+w^k=z^n$. We apply Corollary~\ref{cor:Ellenberg2}
to conclude that $B=0$ or $w=0$. However, $w \ge 1$,
so $B=0$ completing the proof.

\end{document}